\documentclass{article}
\usepackage[margin=1in]{geometry}
\usepackage{graphicx} 
\usepackage{amsmath,amssymb,amsthm}
\title{Focusing on gap lengths in the noisy violinist chip-firing problem}
\author{Leonid Ryvkin}
\date{\today}
\usepackage[style=numeric]{biblatex}
\addbibresource{main.bib}
\newtheorem{lemma}{Lemma}
\newtheorem{thm}[lemma]{Theorem}
\newtheorem{cor}[lemma]{Corollary}
\newtheorem{defi}[lemma]{Definition}
\setlength{\parindent}{0pt}

\usepackage{forest}
\begin{document}

\maketitle
\begin{abstract}
    We encode the states in the noisy violinist chip-firing problem into a sequence of gap lengths, and use this gap-focused perspective to reprove certain statements about final states of flat clusterons.
\end{abstract}

\section{The noisy-violinist problem and its reformulation}
The following problem is suggested and solved in \cite{2302.11067}:

\begin{quote}
Consider a hotel with an infinite number of rooms arranged sequentially on the ground floor. The rooms are labeled from left to right by integers,
with room i being adjacent to rooms $i-1$ and $i+1$. Room $i-1$ is on the left
of room $i$, while room $i+1$ is on the right.\\
A finite number of violinists are staying in the hotel; each room has at most one
violinist in it. Each night, some two violinists staying in adjacent rooms (if two
such violinists exist) decide they cannot stand each other’s noise and move apart:
One of them moves to the nearest unoccupied room to the left, while the other
moves to the nearest unoccupied room to the right. This keeps happening for as
long as there are two violinists in adjacent rooms. Prove that this moving will
stop after a finite number of days.
\end{quote}

We suggest here a slightly different perspective to solving the same problem: We will describe the {\bf state} of the system (with $N$ violinists) by an $N-1$-tuple of numbers $a_\bullet=(a_1,...,a_{N-1})$. If we number the violinists by 1,...,k from left to right $a_k$ is the length of the gap between violinist $k$ and $k+1$.\\

This description does not contain the information of the absolute position of the leftmost violinist, (but we don't care about this information anyways). I.e. a state for us is what is referred to as the \emph{shadow of a state} in \cite{2302.11067}.\\

A {\bf (flat) $N$-clusteron} is a state of $N-1$ consecutive zeroes, i.e. $a_\bullet=1^{N-1}$ (where we use powers to express the concatenation of identical digits). This corresponds to $N$ violinists in consecutive rooms.\\

We will now describe a {\bf  move} from one state of the system  $a_\bullet^t$ to the next one $a_\bullet^{t+1}$.
\begin{itemize}
\item Pick $T$ with $a_T=0$ call $T$ the triggering position.
\item  Set $a_T=2$,
\item  find the biggest $j<T$ with $a_j>0$ and reduce it by one. If $j$ does not exist skip this step
\item  find the smallest $j>T$ with $a_j>0$ and reduce it by one. If $j$ does not exist skip this step
\end{itemize}

If no moves are possible (i.e. the sequence contains no more zeroes) we have reached a final state.  We will denote a sequence of moves by $a_\bullet^1,..., a_\bullet^n$ and the corresponding triggering positions by $T_1,...,T_{n-1}$. \\

I.e., a move is a 0-gap triggering by turning into a 2 and two minus ones propagating left resp. right until they meet a non-zero number (which they would reduce by one) or disappear at the boundary of the state.

Here are all possible sequences of moves for $N=4$ violinists, as described in Figure 1 of \cite{2302.11067}, in our notation:

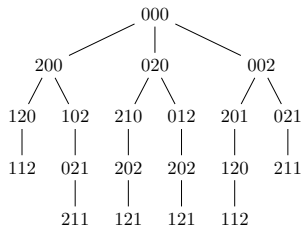
\begin{figure}[ht!]
    \centering
        \scalebox{0.7}{
            \begin{forest}
                [$000$
                [$200$
                [$120$[$112$]]
                [$102$[$021$[$211$]]]
                ]
                [$020$
                [$210$[$202$[$121$]]]
                [$012$[$202$[$121$]]]
                ]
                [$002$
                [$201$[$120$[$112$]]]
                [$021$[$211$]]
                ]
                ]
            \end{forest}
        }
        \caption{Possible move sequences for 4 violinists.}
    \label{fig:4exampletree}
\end{figure}

\section{Reaching a final state}
To show that we always reach a final state, we will use the fact that there is a non-decreasing quantity:
\begin{defi}
We call $|a_\bullet|=\sum_{i=1}^{N-1}a_i$ the norm of the state $a_\bullet$.
\end{defi}

We have the following property for norms of consecutive states $a_\bullet^t$, $a_\bullet^{t+1}$:
$$|a_\bullet^{t+1}|-|a_\bullet^t|\in\{0,1,2\},$$
where the value $2$ is only possible if $|a_\bullet^t|=0^{N-1}$. This means that the norm is non-decreasing and can only stay the same or increase by one (unless the initial states is a flat $N$-clusteron, then it will increase by 2).

\begin{thm} For any initial state $a_\bullet^1\in \mathbb N^{N-1}$, the final state is reached after finitely many steps.
\end{thm}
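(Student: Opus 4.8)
\emph{Proof plan.} The plan is to bootstrap the monovariant already established. Recall $t\mapsto|a_\bullet^t|$ is non-decreasing and, after the first move, increases by at most $1$ at a time. Termination then reduces to two claims: \textbf{(A)} the norm stays bounded along the whole run, $\sup_t|a_\bullet^t|<\infty$; and \textbf{(B)} no infinite run of consecutive \emph{norm-preserving} moves is possible. Granting both: by (A) only finitely many moves raise the norm; these split the sequence of states into finitely many maximal blocks of norm-preserving moves; and each such block is finite by (B).

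Claim (B) is the cheap half, via the auxiliary quantity $\Phi(a_\bullet):=\sum_{i=1}^{N-1}a_i\,2^{i}$. If a norm-preserving move is triggered at $T$, then by definition there is a largest $j<T$ and a smallest $j'>T$ with $a_j,a_{j'}>0$, and the move sends $(a_T,a_j,a_{j'})\mapsto(2,a_j-1,a_{j'}-1)$; hence $\Phi$ changes by $2^{T+1}-2^{j}-2^{j'}$, which is $\le-2^{j}<0$ because $j'\ge T+1$. Since $\Phi$ is a non-negative integer, any block of norm-preserving moves has length at most the value of $\Phi$ at its start; under (A) this is $\le 2^{N-1}\sup_t|a_\bullet^t|<\infty$, so one even extracts an explicit bound on the total number of moves.

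Claim (A) is the substance, and I would prove it by induction on $N$, peeling off the leftmost violinist. The key structural fact is that the subtuple $(a_2,\dots,a_{N-1})$, recording the gaps among violinists $2,\dots,N$, is almost closed under the dynamics: a big move triggered at some $T\ge2$ restricts to a legal $(N-1)$-move at $T$ on this subtuple (possibly one of its boundary moves, which the rules permit), whereas a big move triggered at $T=1$ merely decrements the first positive entry of the subtuple by one. Consequently, if only finitely many moves are triggered at position $1$, the life of the subtuple breaks into finitely many epochs; within each epoch it performs an initial segment of a genuine $(N-1)$-process, whose norm increases only by a finite amount by the inductive hypothesis; and since the inter-epoch decrements only lower the subtuple's norm, that norm stays bounded. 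As $a_1$ itself never exceeds the larger of its initial value and $2$, the norm $|a_\bullet|=a_1+\sum_{i\ge2}a_i$ is then bounded, which is (A).

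The crux --- and where I expect the real obstacle to lie --- is therefore to show that \emph{only finitely many moves are triggered at position $1$}, i.e.\ the leftmost violinist drifts left only finitely often; the small cases suggest it in fact moves left at most once. This does not follow from the inductive bookkeeping alone: counting triggers at $1$ in terms of decrements of $a_1$, and decrements of $a_1$ in terms of the left-boundary moves of the shorter process (which the inductive hypothesis bounds per epoch), only yields a feedback inequality that is too lossy to close. What seems to be needed is a genuine invariant of the left end of the configuration, certifying that once $a_1$ has been reset to $2$ it can never be exhausted back to $0$ --- intuitively, that the violinists strictly to the right of the leftmost one can no longer drive a vacancy all the way out to the left boundary. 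Pinning down such an invariant, together with its mirror image at the right end, is the step that needs a genuinely new idea; everything else is the norm monovariant, the potential $\Phi$, and routine accounting.
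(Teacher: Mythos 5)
Your Claim (B) is correct and self-contained: for a norm-preserving move both neighbours $j<T<j'$ exist, so $\Delta\Phi = 2^{T+1}-2^{j}-2^{j'}\le -2^{j}<0$ because $j'\ge T+1$, and since $\Phi$ is a non-negative integer every block of consecutive norm-preserving moves is finite. The problem is Claim (A), where you set up an induction on $N$ with epochs and then candidly report that you cannot control the number of triggers at position $1$ without ``a genuinely new idea.'' As written, that is a genuine gap: the argument does not close.

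But the gap is self-inflicted. You observe, for $a_1$, that a coordinate ``never exceeds the larger of its initial value and $2$'' --- and that observation holds verbatim for \emph{every} coordinate, since the only transitions a single entry ever undergoes are a decrement by $1$ or a jump from $0$ to $2$. Hence $a_i^t\le\max(a_i^1,2)$ for all $i$ and $t$, so $|a_\bullet^t|\le\sum_i\max(a_i^1,2)\le|a_\bullet^1|+2(N-1)$, which is Claim (A) in one line, with no induction on $N$, no epochs, and no left-boundary invariant. This coordinatewise bound is exactly what the paper itself uses to conclude that the reachable states form a finite subset of $\{0,\dots,K\}^{N-1}$. With that repair your proof is complete and takes a genuinely different route from the paper's: the paper combines finiteness of the state space with a no-cycle argument based on the leftmost coordinate that changes, whereas your combination of the bounded norm with the strictly decreasing potential $\Phi$ on norm-preserving moves is constructive and yields an explicit upper bound on the total number of moves, which the paper's argument does not provide.
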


\begin{proof}Let $k=\max_{1\leq j\leq N-1}a_j$ and $K=\max(k,2)$. Since the components of $a$ either get smaller or get increased from 0 to 2,  the number of possible states starting from $a_\bullet^1$ is finite (a subset of $\{0,1,..., K\}^{n-1}$). Hence, either the assertion of the theorem is true or there is a sequence of $n>1$ states $a_\bullet^1,...,a_\bullet^n$ where the first one equals the last one. Let $i$ be the leftmost changing value, i.e. the smallest value such that $a_j^t=a_j^{t+1}$ for all $j<i$ and all $t$. Let now $t$ be the step in which $i$ changes, then
\begin{itemize}
    \item either there are only zeros left to $a_i^t$ , then $|a_\bullet^{t+1}|>|a_\bullet^t|$, which implies that the sequence can not be a cycle, because the norm is non-decreasing.
    \item or there is a non-zero number left of $i$, which then changes (reduces by 1) in step $t$ , which is a contradiction to our choice of $i$.
\end{itemize}
Hence there can be no cycle, i.e. we eventually run into a final state.
\end{proof}

\section{Characterization of  possible final states}
\cite{2302.11067} also gives a characterization of all possible final states when starting with a flat $N$-clusteron. Those are given by states containing just ones and one two (in an arbitrary position), i.e. they are of the form $1^k21^{N-2-k}$ for $k\in 0,...,N-2$. We will reprove this statement in this section.

First we observe that if we start with a state in the space $\{0,1,2\}^{N-1}$, then we will always stay in this space (gaps of length $>2$ will never appear). On the other hand, the norms of the states are bounded from above (even locally):

\begin{thm}
Starting with  $a_\bullet^1= 0^{N-1}$, there can never appear a state $a_\bullet^t$ containing a sub-sequence of length $l$ with norm higher than $l+1$.  I.e. there is no $l$ and $i$ such that $\sum_{i=0}^{l-1}a_{j+i}^t>l+1$.
\end{thm}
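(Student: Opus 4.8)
The plan is to prove the statement by induction on the index $t$ of the state, keeping the full claim as the inductive hypothesis: \emph{for the state $a_\bullet^t$ and for every $l\ge 1$ and every starting position $i$, one has $\sum_{m=0}^{l-1}a_{i+m}^t\le l+1$.} The base case $t=1$ is immediate, since $a_\bullet^1=0^{N-1}$ and every window sum is $0$.

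For the inductive step, assume the bound holds at time $t$, fix a window $W=\{i,i+1,\dots,i+l-1\}$, and consider the move from $a_\bullet^t$ to $a_\bullet^{t+1}$ with triggering position $T$, left reduction at some position $j_L<T$ and right reduction at some position $j_R>T$ (where $j_L$ and/or $j_R$ may fail to exist). Only the entries at $T$, $j_L$, $j_R$ change: the one at $T$ grows by $2$, the ones at $j_L$ and $j_R$ drop by $1$. Hence the norm of $W$ changes by $+2$ if $T\in W$ and by $-1$ for each of $j_L,j_R$ that lies in $W$. If $T\notin W$ the window norm does not increase, so the bound persists by the inductive hypothesis; the same holds if $T\in W$ and both reductions land in $W$, since then the window norm is unchanged.

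It remains to treat the cases where $T\in W$ and at most one reduction lies in $W$. If neither reduction lies in $W$, then — by maximality of $j_L$, minimality of $j_R$, or because one of them does not exist — there is no positive entry in $[i,T-1]$ or in $[T+1,i+l-1]$, and $a_T^t=0$; thus all of $W$ is zero at time $t$, so its norm at time $t+1$ equals $2\le l+1$. If exactly one reduction lies in $W$, say $j_L\notin W$ (the case $j_R\notin W$ being the mirror image), then $[i,T-1]$ consists of zeros at time $t$, and since $a_T^t=0$ as well, the norm of $W$ at time $t$ equals $\sum_{m=T+1}^{i+l-1}a_m^t$, i.e.\ the norm of a window of length $i+l-1-T\le l-1$. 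By the inductive hypothesis at time $t$ applied to that shorter window, this is at most $l$; adding the net change $+2-1=+1$ coming from $T$ and $j_R$, the norm of $W$ at time $t+1$ is at most $l+1$, as desired.

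The only step that is more than bookkeeping is the last one: a window's norm really can grow, but only by $1$, and this growth is compensated by the observation that the triggering position sits at the boundary of the ``occupied part'' of the window, which lets us dominate the old window norm by that of a window shorter by one and so convert the bound $l$ into $l+1$. I expect the care to be needed mainly in the degenerate sub-cases — the trigger at an endpoint of $W$, a non-existent $j_L$ or $j_R$, or very small $l$ — all of which are routine but should be checked.
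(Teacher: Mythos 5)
Your proof is correct and rests on the same key observation as the paper's: a window's sum can only increase when the trigger lies inside it and one of the decrements escapes, which forces a run of zeros from the trigger out to the window boundary and lets you pass to a strictly shorter window at the previous time. The only difference is bookkeeping --- you induct on the time $t$ with all window lengths $l$ in the hypothesis, while the paper inducts on $l$ and considers the first violating time --- so the two arguments are essentially the same.
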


\begin{proof}
We do induction by length $l$. For $l=1$ this is just the statement that no numbers higher than $2$ can appear from a state in $\{0,1,2\}^{N-1}$. Assume we have proven the statement for lengths up to $l$. Further assume $t$ is the first step in which $\sum_{j=0}^{l}a_{i+j}^t>l+2$
We can further assume that the value of $\sum_{j=0}^{l}a_{i+j}^{t-1}$ is $l+2$. It is at most $l+2$ by assumption, and the only case where it would increase by more than one during step $t-1$  is if the whole sub-sequence $(a^{t-1}_i,...,a^{t-1}_{i+l})$ was zero, in which case $\sum_{j=0}^{l}a_{i+j}^t=2\leq l-2$.
For the sum  $\sum_{j=0}^{l}a_{i+j}^{t-1}$ to increase, either $a_i^{t-1}$ or $a_{i+l}^{t-1}$  has to be zero in which case
$\sum_{j=0}^{l-1}a_{i+1+j}^{t-1}$ or  $\sum_{j=0}^{l-1}a_{i+j}^{t-1}$ had value $l+2$ , contradicting the induction hypothesis.
\end{proof}

Final states never contain zeroes and any (non-initial) state contains at least one 2. Since by the above the norm of the state can never surpass $N-1+1=N$, we obtain:

\begin{cor}
Starting with  $a_\bullet^1= 0^{N-1}$, the final position always consists of one two and the rest ones, i.e. is of the form $1^k21^{N-2-k}$ for $k\in 0,...,N-2$.
\end{cor}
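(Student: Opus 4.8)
The plan is to derive the Corollary by combining three facts that are all either already established or easy consequences of the setup: (i) starting from the flat clusteron $0^{N-1}$, every state stays in $\{0,1,2\}^{N-1}$; (ii) the final state contains no zero (otherwise a move would be possible, contradicting finality); and (iii) every non-initial state contains at least one $2$ (the triggering position of the first move becomes a $2$, and thereafter one checks that a $2$ cannot vanish from the sequence unless it is hit twice in one move, which the move rule does not allow — but in fact the cleanest route is to note the norm argument below forces it). So the final state is a word over $\{1,2\}$ of length $N-1$.

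Next I would bring in Theorem~3 (the local norm bound) in the special case $l=N-1$: taking $i=1$ and the whole sequence, we get $|a_\bullet^t|=\sum_{j=1}^{N-1}a_j^t\le (N-1)+1=N$ for every reachable state, in particular for the final state. Now write the final state as a word in $\{1,2\}$ of length $N-1$ with, say, $m$ letters equal to $2$ and $N-1-m$ letters equal to $1$. Its norm is $2m+(N-1-m)=N-1+m$, and the bound $N-1+m\le N$ forces $m\le 1$.

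It remains to rule out $m=0$, i.e.\ the all-ones state $1^{N-1}$. Here I would invoke the norm monotonicity recorded just before Theorem~2: the norm is non-decreasing along a trajectory, and the only way it jumps by $2$ in a single step is from the flat clusteron $0^{N-1}$. Starting at $0^{N-1}$, the first move takes the norm to $2$, so every subsequent state has norm $\ge 2$; since $N\ge 2$ for the problem to be non-trivial (for $N=2$ the state $0$ immediately fires to $2$, and for $N=1$ there is nothing to prove), the final state has norm $\ge 2 > N-1 = |1^{N-1}|$ whenever $N\ge 2$, so $m\ne 0$ and hence $m=1$. Therefore the final state is exactly $1^k 2 1^{N-2-k}$ for some $k\in\{0,\dots,N-2\}$.

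The only genuinely delicate point is fact (iii), that every non-initial state still contains a $2$; the rest is bookkeeping with the norm inequalities. I would handle (iii) by the same first-changing-coordinate style of argument used in the proof of Theorem~2, or — more economically — simply observe that a state over $\{1,2\}$ with no $2$ has norm $N-1$, which contradicts the strict lower bound on the norm obtained in the previous paragraph; that way (iii) is subsumed into the $m=0$ analysis and nothing extra needs to be proved. I expect this consolidation to be the main thing to get right cleanly, but it is short.
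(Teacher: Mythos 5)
Your overall strategy coincides with the paper's: the final state contains no zeros, hence lies in $\{1,2\}^{N-1}$, and Theorem~3 applied to the full sequence (take $l=N-1$) bounds its norm by $N$, so if $m$ denotes the number of twos then $N-1+m\le N$, i.e.\ $m\le 1$. That part is correct. The gap is in your exclusion of $m=0$. You propose to derive it from the lower bound ``final norm $\ge 2$'' together with the inequality $2>N-1$, but that inequality is false for every $N\ge 3$: the all-ones state $1^{N-1}$ has norm $N-1\ge 2$, so it is perfectly compatible with the norm being non-decreasing and at least $2$. Moreover, since a move leaves the norm unchanged whenever the triggered zero finds nonzero neighbours on both sides, monotonicity gives you no lower bound on the final norm beyond $2$, so this route cannot be repaired. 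Your alternative suggestion, a ``first-changing-coordinate style of argument,'' is left unspecified, and it is precisely the point you flag as delicate that is left unproved.

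The fact you actually need --- your fact (iii), which the paper states as ``any (non-initial) state contains at least one 2'' --- has a one-line proof that does not involve the norm at all: a move sets $a_T=2$ at its triggering position $T$ and only decrements positions $j\ne T$, so the state immediately after any move contains a $2$ at $T$. Since $0^{N-1}$ is not final for $N\ge 2$, the final state is the result of at least one move and therefore contains a $2$ at the last triggering position; hence $m\ge 1$, and combined with $m\le 1$ the final state is $1^k21^{N-2-k}$. With this substitution your argument becomes essentially the paper's proof.
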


We still need to show that all positions of the two are possible, i.e. that $1^k21^{N-2-k}$ is a possible final state for all  $k\in 0,...,N-2$. We will prove this by induction and need the following statement:

\begin{lemma}Assume there is a sequence of moves to obtain a state $(a_0,...,a_{N-2})$ from $0^{N-2}$. Then there is a sequence of moves to obtain $(a_0,...,a_{i-1},0, a_i,...., a_{N-2})$ from $0^{N-1}$ for any $i\in 0,...,N-1$.
\end{lemma}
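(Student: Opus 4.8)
We want: if $(a_0,\dots,a_{N-2})$ is reachable from $0^{N-2}$, then $(a_0,\dots,a_{i-1},0,a_i,\dots,a_{N-2})$ is reachable from $0^{N-1}$.

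The natural idea: take the sequence of moves producing $(a_0,\dots,a_{N-2})$ and "lift" it to a sequence on one-longer states by inserting a $0$ at position $i$ and keeping that slot... hmm, but the inserted $0$ interacts with moves.

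Wait. Let me think more carefully. The state has $N-1$ entries indexed $0,\dots,N-2$ in the lemma (slight index shift from before). We start from $0^{N-1}$ (all zeros, length $N-1$) and want to reach a state that's the original target with a $0$ spliced in at position $i$.

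Key insight: if I insert a $0$ at position $i$ and try to run the old moves, the issue is that a "minus one" propagating past position $i$ would now stop... no wait, minus ones skip over zeros (they look for the first *nonzero*). So inserting a zero doesn't change how minus-ones propagate through — they skip it. The only subtlety: a triggering at position $i$ itself, and whether the inserted zero ever becomes nonzero.

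So here's the cleaner plan. Let $\sigma = T_1, T_2, \dots, T_{m}$ be the triggering positions of a move sequence from $0^{N-2}$ to $(a_0,\dots,a_{N-2})$. Reindex: in the new length-$N-1$ state, positions $0,\dots,i-1$ are unchanged, position $i$ is the new slot, positions $i,\dots,N-2$ (old) become $i+1,\dots,N-1$ (new). Define new triggering positions $\tilde T_k$ = $T_k$ if $T_k < i$, and $T_k + 1$ if $T_k \ge i$. Claim: running $\tilde T_1,\dots,\tilde T_m$ from $0^{N-1}$ produces exactly $(a_0,\dots,a_{i-1},0,a_i,\dots,a_{N-2})$, i.e. the same thing with a frozen $0$ at position $i$.

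To prove the claim, I would show by induction on $k$ that after $k$ moves the new state equals the old state after $k$ moves, with a $0$ inserted at position $i$, provided every intermediate old state has $a_{i-1}$ or $a_i$ (the entries straddling the insertion point) behaving compatibly. The crucial check: a minus-one propagating leftward from $\tilde T_k > i$ looking for the first nonzero to its left — in the new state it encounters the same nonzero entries (the inserted $0$ is skipped since it's always $0$), so it lands on the same (shifted) entry. Symmetrically for rightward. And the inserted $0$ is never itself a triggering position (we never chose it), so it stays $0$ forever. Also need: the inserted $0$ never gets turned into a $2$ — correct, since $2$'s only appear at triggering positions. The entries on either side, $a_{i-1}$ and $a_i$ (old), are the first nonzero neighbors for the relevant propagations exactly when they were before, because a run of zeros with one more zero in it is still a run of zeros.

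**The main obstacle.** The subtle point is whether a minus-one can "pass through" the insertion region differently — specifically, what if in the old run, at some step a leftward minus-one from $T_k \ge i$ would hit $a_{i}$ (old) but $a_i = 0$ at that moment, so it continues to $a_{i-1}$; in the new indexing it hits new-position-$(i+1)$ which is $0$, skips, then hits the inserted $0$ at position $i$, skips, then hits new-position-$(i-1)$ = old $a_{i-1}$: same landing. So actually there's no obstacle here — zeros are transparent and one more transparent zero changes nothing. The only genuine thing to nail down is the bookkeeping that the inserted $0$ is genuinely never triggered and never becomes the target of a "$a_T = 2$" assignment, which is immediate since $\tilde T_k \ne i$ for all $k$ by construction. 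So I expect the proof to be a clean induction with the statement "new state at time $k$ = old state at time $k$ with a $0$ spliced at position $i$," the only care needed being the case analysis for the two propagation directions and confirming the skip-over-zeros rule makes the spliced zero invisible.
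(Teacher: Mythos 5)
Your proposal is correct and uses exactly the same construction as the paper: shifting each trigger position $T_k$ to $T_k+1$ when $T_k\geq i$ and leaving it unchanged otherwise. The paper simply states this reindexing and asserts it works, whereas you additionally spell out the (correct) justification that the inserted zero is transparent to the propagating decrements and is never itself triggered.
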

\begin{proof}
    If $T_1,...,T_t$ is the sequence of trigger points for $(a_0,...,a_{N-2})$, then we set
\[
\tilde T_k =
\begin{cases}
T_k & \text{if } T_k < i, \\
T_k + 1 & \text{if } T_k \geq  i.
\end{cases}
\]
and  $\tilde T_1,...,\tilde T_t$ become a sequence of trigger points leading to $(a_0,...,a_{i-1},0, a_i,...., a_{N-2})$ from $0^{N-1}$.
\end{proof}

We can now prove that

\begin{thm}
Starting with  $a_\bullet^1= 0^{N-1}$ all states of the form $1^k21^{N-2-k}$ for $k\in 0,...,N-2$ can be final.
\end{thm}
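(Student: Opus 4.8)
The plan is to prove the statement by induction on $N$. The base case $N=2$ is immediate, since $0^1$ fires to $2$, which is the only state of the claimed form. For the inductive step, assume that starting from $0^{N-2}$ every state $1^k21^{N-3-k}$ with $k\in\{0,\dots,N-3\}$ is reachable as a final state. I want to produce, starting from $0^{N-1}$, every state $1^k21^{N-2-k}$ with $k\in\{0,\dots,N-2\}$.

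The key idea is to combine the inductive hypothesis with the insertion Lemma. Given a target $1^k21^{N-2-k}$ for the $N$-clusteron, I want to realize it by first running a sequence of moves on a shorter state and then inserting an extra zero at a carefully chosen position. Concretely, I would split on whether the desired two sits in the left half or the right half of the word. Take a final state $w=1^{k'}21^{N-3-k'}$ reachable from $0^{N-2}$ by induction, with trigger sequence $T_1,\dots,T_t$, and apply the Lemma with an insertion index $i$ so that the inserted zero ends up being filled appropriately during or after the existing moves. The subtlety is that the Lemma produces the state $(a_0,\dots,a_{i-1},0,a_i,\dots,a_{N-2})$, i.e. it leaves a literal zero sitting in the result — which is \emph{not} a final state of the $N$-clusteron. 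So the real work is: after inserting the zero, append one more trigger at that zero position (or nearby) to fire it and convert the length-$(N-1)$ near-final word with one extra zero into the genuine final word $1^k21^{N-2-k}$, and check that this last firing does exactly what is needed and terminates there.

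The step I expect to be the main obstacle is exactly this bookkeeping of the final extra firing: when I insert a zero next to the existing two (or next to a one) and then trigger it, the two minus-ones it emits will travel left and right and decrement the first nonzero entries they meet; I need to verify that the net effect is to replace a local pattern like $1\,0\,2$ or $2\,0\,1$ or $1\,0\,1$ with a pattern consisting only of ones and a single two, positioned exactly at the desired index $k$, and that no further zero remains. This amounts to a short case analysis on where the inserted zero lands relative to the two in $w$, using that after the insertion the word lies in $\{0,1,2\}^{N-1}$ and that (by Theorem~3 / the Corollary) the process from $0^{N-1}$ must terminate in some $1^k21^{N-2-k}$ — so it suffices to show the inserted zero can be placed so that the unique reachable final state from our constructed trajectory has its two in position $k$. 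I would organize the argument so that, for $k\le N-3$, I reuse the induction hypothesis almost directly with a trailing insertion, and for $k=N-2$ I insert at the far left and fire once more; symmetry of the construction (left–right mirror) handles the remaining cases, so that in the end all $N-1$ positions of the two are covered.
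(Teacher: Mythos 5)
Your high-level plan --- induction on $N$, the insertion Lemma applied to a final state of the $(N-2)$-gap problem, and the Corollary to guarantee that whatever you end in has the form $1^j21^{N-2-j}$ --- is sound and is essentially the paper's strategy. The genuine gap is the claim that \emph{one} additional trigger disposes of the inserted zero. Firing a zero emits two decrements, and every decrement that lands on a $1$ creates a \emph{new} zero; since the word produced by the Lemma contains exactly one $2$, at most one of the two decrements can be absorbed by it, so a single firing yields a final state only when the inserted zero sits at a boundary \emph{and} the $2$ is its immediate inward neighbour (e.g.\ $021^{N-3}\to 21^{N-2}$, which handles $k=0$, and the mirror image $1^{N-3}20\to 1^{N-2}2$ for $k=N-2$). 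Your proposed local replacements actually read $1\,0\,2\to 0\,2\,1$, $2\,0\,1\to 1\,2\,0$ and $1\,0\,1\to 0\,2\,0$: none of them removes the zero, so the ``short case analysis'' cannot close as stated. For the same reason, ``for $k=N-2$ I insert at the far left and fire once more'' fails: $01^{N-3}2\to 201^{N-4}2$ is not final.

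What is actually needed for the intermediate values of $k$ --- and what the paper carries out --- is to fire the inserted zero \emph{repeatedly}, so that it walks step by step through the block of ones until it annihilates against the $2$:
$$01^k21^{N-3-k}\to 201^{k-1}21^{N-3-k}\to 1201^{k-2}21^{N-3-k}\to\cdots\to 1^{k-1}2021^{N-3-k}\to 1^k21^{N-2-k}.$$
Until you carry out this multi-step computation (or the analogous one for a trailing insertion, which shifts the $2$ one place to the right) you have not determined \emph{where} the two of the final state ends up; the Corollary only tells you that the process terminates in some $1^j21^{N-2-j}$, not which $j$, and pinning down $j$ for each choice of insertion is exactly the content of the theorem. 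With the walking-zero computation in place your case split does cover all $k\in\{0,\dots,N-2\}$, so the proof is repairable, but as written the central verification rests on a mechanism that demonstrably does not terminate in one step.
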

\begin{proof}
Again we proceed by induction. For $N-1=1$ (i.e. a single gap) there is only one final state $(2)$ possible. Assume we have proven the statement for $N-2$ gaps.
We now want to construct a sequence of moves turning $ 0^{N-1}$ into $1^k21^{N-2-k}$.

\begin{itemize}
    \item For $k=0$, we use the above Lemma and the induction hypothesis to arrive at the state $021^{N-3}$. Then we trigger the leftmost spot.
    \item  For $k=N-2$, we mirror the above construction, i.e. we generate $1^{N-3}20$ and trigger the rightmost position.
    \item For the other $k$ we use the induction hypothesis and Lemma to arrive at the state $01^{k-1}21^{N-2-k}$. By triggering the only possible zero repeatedly we will move the zero further and further to the right until we arrive at the desired state.

$$01....121...1\to 201...121....1\to   1201...121....1 \to   ... \to 1....12021...1 \to  1....121....1$$
\end{itemize}

\end{proof}

\printbibliography

\end{document}